\documentclass[12pt,a4paper,reqno]{amsart}
\usepackage{lineno}
\allowdisplaybreaks
\usepackage{amsmath}
\usepackage{amssymb}
\usepackage{amsfonts}
\usepackage{amsthm,amsfonts,amsthm,latexsym,enumerate,url,cases}
\numberwithin{equation}{section}
\usepackage{mathrsfs}
\usepackage{hyperref}
\hypersetup{colorlinks=true,citecolor=blue,linkcolor=blue,urlcolor=blue}
     \addtolength{\textwidth}{3 truecm}
     \addtolength{\textheight}{1 truecm}
     \setlength{\voffset}{-.6 truecm}
     \setlength{\hoffset}{-1.3 truecm}

\def\pmod #1{\ ({\rm{mod}}\ #1)}

\theoremstyle{plain}
\newtheorem{theorem}{Theorem}

\newtheorem{lemma}{Lemma}

\newtheorem{corollary}{Corollary}
\newtheorem{proposition}{Proposition}
\newtheorem{conjecture}{Conjecture}
\theoremstyle{definition}

\usepackage{etoolbox}
\makeatletter
\patchcmd{\@settitle}{\uppercasenonmath\@title}{}{}{}
\patchcmd{\@setauthors}{\MakeUppercase}{}{}{}
\patchcmd{\section}{\scshape}{}{}{}
\makeatother

\begin{document}

\title
[Note on a conjecture of S\'ark\"ozy on special sequences]
{Note on a conjecture of S\'ark\"ozy on special sequences}

\author
[Y. Ding, H. Li and Z. Zhang] {Yuchen Ding, Huixi Li and Zihan Zhang}

\address{(Yuchen Ding$^{1,2}$) $^1$School of Mathematics,  Yangzhou University, Yangzhou 225002, People's Republic of China}
\address{$^2$HUN-REN Alfr\'ed R\'enyi Institute of Mathematics, Budapest, Pf. 127, H-1364 Hungary}
\email{ycding@yzu.edu.cn}

\address{(Huixi Li) School of Mathematical Sciences and LPMC, Nankai University, Tianjin 300071, People's Republic of China}
\email{lihuixi@nankai.edu.cn}

\address{(Zihan Zhang) School of Mathematical Sciences and LPMC, Nankai University, Tianjin
300071, People’s Republic of China}
\email{2211056@mail.nankai.edu.cn}

\keywords{Cauchy-Davenport theorem, Weyl equidistribution theorem, Lebesgue measure, density of sets, measurable sets, mod $1$.}
\subjclass[2010]{11B75, 05D10, 37A45}

\begin{abstract}
Let $\alpha>1$ be an irrational number and $k\ge 2$ a positive integer. Let $f(x)$ be a polynomial with positive integer coefficients. Solving a 2001 problem of S\'ark\"ozy on special sequences, Hegyv\'ari proved in 2003 that there exists an infinite sequence $A$ with density $\frac{1}{k}-\frac{1}{k\alpha}$ such that
$$
\big\{f(a_1)+\ldots+f(a_k): a_i\in A, 1\le i\le k\big\}\cap \big\{\lfloor n\alpha\rfloor: n\in \mathbb{N}\big\}=\emptyset.
$$
Hegyv\'ari  also proved that the density given by him is optimal for $k=2$. In this article, we show that the density $\frac{1}{k}-\frac{1}{k\alpha}$ given by Hegyv\'ari is actually optimal for all $k\ge 2$. 
\end{abstract}
\maketitle

In section 6 of his 2001 problem list, S\'ark\"ozy \cite[Conjecture 47]{Sarkozy} posed the following attractive conjecture involving special sequences.

\begin{conjecture}\label{conjecture1}
If $A$ and $B$ are infinite sequences of positive integers, the lower asymptotic densities of $A$ and $B$ are positive, and $\alpha$ is an irrational number, then 

$1)$ $ab=\lfloor n\alpha\rfloor, \quad a\in A,~ b\in B,$

$2)$ $a^2+b^2=\lfloor n\alpha\rfloor, \quad a\in A,~ b\in B$\\
have infinitely many solutions, respectively.
\end{conjecture}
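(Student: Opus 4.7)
\smallskip
\noindent\textbf{Proof proposal.} The plan is to reduce each part of the conjecture to an equidistribution statement modulo~$1$, and then exploit the positive lower densities of $A$ and $B$ through tools from analytic and additive combinatorics. The key preliminary observation is that for $\alpha>1$ irrational, an integer $m\ge 1$ lies in the Beatty set $\{\lfloor n\alpha\rfloor:n\in\mathbb{N}\}$ if and only if $\{m/\alpha\}\in(1-1/\alpha,\,1)$; thus this set is precisely the integers whose image under $x\mapsto x/\alpha$ modulo~$1$ lies in a fixed interval of length $1/\alpha$. It therefore suffices to show that, under the hypotheses of Conjecture~\ref{conjecture1}, the quantities $ab/\alpha$, respectively $(a^2+b^2)/\alpha$, land in this interval modulo~$1$ for infinitely many admissible pairs $(a,b)$.

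For part $1)$, I would fix a large scale $N$, count pairs $(a,b)\in (A\cap[1,N])\times(B\cap[1,N])$ for which $\{ab/\alpha\}$ lies in the Beatty interval, and expand the indicator of this interval as a Fourier series. The trivial-frequency term yields the expected main contribution of order $\delta(A)\delta(B)N^2/\alpha$, where $\delta(\cdot)$ denotes lower asymptotic density, and the task is to control the nonzero frequencies by extracting cancellation from bilinear exponential sums of the form $\sum_{a,b} e(k\, ab/\alpha)$ through Cauchy--Schwarz in one variable. For part $2)$, I would set $A^{(2)}=\{a^2:a\in A\}$ and $B^{(2)}=\{b^2:b\in B\}$ and combine a Waring-type circle method, which handles minor arcs via Weyl differencing of the squares, with a modular Brunn--Minkowski-type input of the kind developed in this paper, aiming at a lower bound for $|A^{(2)}+B^{(2)}|$ within each window of length $N^2$. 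If one could guarantee density at least $1/\alpha$ of $A^{(2)}+B^{(2)}$ in every interval of length $\alpha$, the Beatty interval would then be hit by pigeonhole and the conclusion would follow.

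The principal obstacle is that the assumption of positive lower density on $A$ and $B$ alone supplies no multiplicative or algebraic regularity. In part $1)$, the product set $AB$ may concentrate on atypical residues modulo arbitrary integers, so cancellation in the bilinear sum $\sum_{a,b} e(k\,ab/\alpha)$ is not automatic: one must first regularize $A$ and $B$ and then extract genuine Type II cancellation on the regular part, which is delicate because the Beatty target forces the relevant frequencies $k$ to scale like $\alpha$ rather than to be free parameters. In part $2)$, bounding $|A^{(2)}+B^{(2)}|$ from below at scale $N^2$ lies beyond the reach of Pl\"unnecke--Ruzsa, since the squaring map destroys the additive structure required by standard sumset inequalities. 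I expect overcoming either obstacle to require either a new Beatty-tailored inverse theorem or a quantitative bilinear equidistribution result compatible with only density hypotheses on the factor sets.
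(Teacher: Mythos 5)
There is a fundamental problem here: the statement you are trying to prove is false, and the paper never proves it --- it records it as a conjecture of S\'ark\"ozy precisely because Hegyv\'ari (2003) \emph{disproved} it. The whole point of the present paper is to show that the density in Hegyv\'ari's counterexample is optimal. A concrete counterexample to part $2)$, which is exactly the construction behind (\ref{eq-1-1}) with $f(x)=x^2$ and $k=2$, goes as follows. Recall (as you correctly note) that $m\in\{\lfloor n\alpha\rfloor:n\in\mathbb{N}\}$ if and only if $\{m/\alpha\}\in(1-1/\alpha,1)$. Take
$$
A=B=\Big\{n\in\mathbb{N}:\ \big\{n^2/\alpha\big\}\in\Big(0,\tfrac12\big(1-\tfrac1\alpha\big)\Big)\Big\}.
$$
By Weyl's equidistribution theorem this set has asymptotic density $\tfrac12-\tfrac1{2\alpha}>0$, yet for all $a\in A$, $b\in B$ one has $\{(a^2+b^2)/\alpha\}\in(0,1-1/\alpha)$ (the two fractional parts sum to less than $1$, so there is no wrap-around), hence $a^2+b^2$ never equals $\lfloor n\alpha\rfloor$. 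So part $2)$ fails outright; Hegyv\'ari's paper likewise refutes part $1)$.

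This also pinpoints where your strategy breaks down, independently of the truth value: the ``main term'' in your Fourier expansion, of order $\delta(A)\delta(B)N^2/\alpha$, is only the correct prediction if $\{(a^2+b^2)/\alpha\}$ (resp.\ $\{ab/\alpha\}$) equidistributes over pairs from $A\times B$, and a bare positive-density hypothesis cannot force this, because $A$ and $B$ are allowed to be defined \emph{in terms of} $\alpha$, as above. For such sets the nonzero frequencies are not error terms at all; they carry the entire concentration of $\{a^2/\alpha\}$ in a short interval, so no amount of Cauchy--Schwarz, Weyl differencing, or sumset lower bounds can restore the cancellation. The correct statements in this circle of ideas are of the opposite type: a density threshold (here $\frac1k-\frac1{k\alpha}$, Theorem \ref{thm1} of the paper) below which counterexamples exist and above which intersection with the Beatty sequence is forced, the latter proved via the modular Brunn--Minkowski-type Theorem \ref{thm2} rather than via equidistribution of the sums themselves.
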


Let $\mathbb{N}$ be the set of natural numbers. We need only to consider the case $\alpha>1$ in Conjecture \ref{conjecture1}, since $\big\{\lfloor n\alpha\rfloor: n\in \mathbb{N}\big\}=\mathbb{N}$ for $0<\alpha< 1$, in which case Conjecture \ref{conjecture1} is correct trivially.

Hegyv\'ari \cite{Hegyvari} answered Conjecture \ref{conjecture1} negatively. Surprisingly, as illustrated by Hegyv\'ari even a weaker form of Conjecture \ref{conjecture1} fails.
Let $\alpha>1$ be an irrational number and $k\ge 2$ a positive integer. Let $f(x)$ be a polynomial with positive integer coefficients. Then Hegyv\'ari proved that there exists an infinite sequence $A$, for which the density of $A$ exists, $d(A)=\frac{1}{k}-\frac{1}{k\alpha}>0$ and
\begin{align}\label{eq-1-1}
\big\{f(a_1)+\ldots+f(a_k): a_i\in A, 1\le i\le k\big\}\cap \big\{\lfloor n\alpha\rfloor: n\in \mathbb{N}\big\}=\emptyset,
\end{align}
where $d(A)$ is the density of $A$. Hegyv\'ari \cite{Hegyvari} also showed that the density $\frac{1}{k}-\frac{1}{k\alpha}$ he provided is optimal in the case $k=2$. Specifically,  let $A$ be an infinite sequence of positive integers with $\overline{d}(A)>\frac{1}{2}-\frac{1}{2\alpha}$, where $\overline{d}(A)$ is the upper asymptotic density of the set $A$. Then Hegyv\'ari proved that 
\begin{align}\label{eq-1-2}
\Big|\big\{f(a_1)+f(a_2): a_i\in A, i=1, 2\big\}\cap \big\{\lfloor n\alpha\rfloor: n\in \mathbb{N}\big\}\Big|=\infty.
\end{align}
It is of interest to determine whether the density $\frac{1}{k}-\frac{1}{k\alpha}$ of $A$ satisfying (\ref{eq-1-1}) is optimal for all $k\ge 2$. We will show that this is in fact the case.

\begin{theorem}\label{thm1}
Let $\alpha>1$ be an irrational number and $k\ge 2$ a positive integer. Let $f(x)$ be a polynomial with positive integer coefficients. Let $A$ be an infinite sequence of positive integers with $\overline{d}(A)>\frac{1}{k}-\frac{1}{k\alpha}$. Then
\begin{align*}
\Big|\big\{f(a_1)+\ldots+f(a_k): a_i\in A, 1\le i\le k\big\}\cap \big\{\lfloor n\alpha\rfloor: n\in \mathbb{N}\big\}\Big|=\infty.
\end{align*}
\end{theorem}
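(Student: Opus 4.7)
\medskip

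\noindent\emph{Strategy.} Write $\beta=1/\alpha$. A short computation with the floor function shows that a positive integer $m$ belongs to the Beatty set $\{\lfloor n\alpha\rfloor:n\in\mathbb{N}\}$ precisely when the fractional part $\{m\beta\}$ lies in $(1-\beta,1)$. The plan is to push the problem to the torus $\mathbb{T}=\mathbb{R}/\mathbb{Z}$ via the map $a\mapsto\{\beta f(a)\}$, convert the density lower bound on $A$ into a Lebesgue measure lower bound on a subset $\widetilde A\subseteq\mathbb{T}$, apply the modular Brunn--Minkowski inequality of the paper to force $|k\widetilde A|>1-\beta$, and finally lift a point of the overlap $k\widetilde A\cap(1-\beta,1)$ back to a genuine integer sum $f(a_1)+\dots+f(a_k)$ belonging to the Beatty set.

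\medskip

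\noindent\emph{From integers to the torus.} Fix $\epsilon>0$ small enough that $\overline d(A)\ge\tfrac{1}{k}-\tfrac{1}{k\alpha}+\epsilon$ and $k\epsilon/2<1/\alpha$. Choose $\delta>0$ with $\delta<k\epsilon/4$ and an integer $M>k/\delta$. Partition $[0,1)$ into equal half-open intervals $I_1,\dots,I_M$. Since $\beta$ is irrational and $f$ has positive integer leading coefficient, the polynomial $\beta f(x)$ has irrational leading coefficient, so Weyl's equidistribution theorem gives
\[
\#\{a\le N:\{\beta f(a)\}\in I_j\}=N/M+o(N)\qquad(N\to\infty),
\]
uniformly in $j\in\{1,\dots,M\}$. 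For an arbitrary threshold $N_0$, set
\[
T_N=\bigl\{j:(A\cap[N_0,N])\cap\{a:\{\beta f(a)\}\in I_j\}\ne\emptyset\bigr\}.
\]
Summing the equidistribution bound over $j\in T_N$ gives $\#(A\cap[N_0,N])\le\#T_N\cdot(N/M+o(N))$. Passing to a subsequence of $N$ along which $\#(A\cap[N_0,N])/N\to\overline d(A)$ then yields $\#T_N/M>\tfrac{1}{k}-\tfrac{1}{k\alpha}+\epsilon/2$ for all $N$ large in the subsequence. Define $\widetilde A=\bigcup_{j\in T_N}I_j\subseteq\mathbb{T}$ for one such $N$; this has Lebesgue measure $|\widetilde A|>\tfrac{1}{k}-\tfrac{1}{k\alpha}+\epsilon/2$.

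\medskip

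\noindent\emph{Sumset and lift-back.} The modular Brunn--Minkowski inequality of the paper yields $|k\widetilde A|\ge\min(1,k|\widetilde A|)>1-\tfrac{1}{\alpha}+\tfrac{k\epsilon}{2}$, so the overlap $k\widetilde A\cap(1-\beta+\delta,1-\delta)$ has positive measure. Pick any $x$ in it and write $x\equiv y_1+\dots+y_k\pmod 1$ with $y_i\in I_{j_i}$, $j_i\in T_N$. For each $i$ choose $a_i\in A\cap[N_0,N]$ with $\{\beta f(a_i)\}\in I_{j_i}$; such $a_i$ exists by definition of $T_N$. Since both $y_i$ and $\{\beta f(a_i)\}$ lie in the common interval $I_{j_i}$ of length $1/M$, summing the individual errors gives
\[
d_{\mathbb{T}}\bigl(\{\beta(f(a_1)+\dots+f(a_k))\},\,x\bigr)<k/M<\delta,
\]
and since $x\in(1-\beta+\delta,1-\delta)$ this forces $\{\beta(f(a_1)+\dots+f(a_k))\}\in(1-\beta,1)$, i.e.\ $f(a_1)+\dots+f(a_k)\in\{\lfloor n\alpha\rfloor\}$. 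Because $f$ has positive coefficients and each $a_i\ge N_0$, these sums are at least $kf(N_0)\to\infty$, so letting $N_0$ run through $\mathbb{N}$ produces infinitely many distinct Beatty values.

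\medskip

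\noindent\emph{Main obstacle.} The only genuinely non-routine input is the modular Brunn--Minkowski inequality itself, which supplies the factor-$k$ growth of the torus sumset; everything else --- uniformity of the Weyl error across the $M$ intervals, preservation of strict inequalities through the $o(1)$ losses when building $\widetilde A$, and choosing the margin $\delta$ so that a torus-level overlap lifts to an exact integer Beatty value --- is careful but standard book-keeping.
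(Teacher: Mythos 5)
Your proposal is correct, and it uses the same two pillars as the paper --- the modular Brunn--Minkowski theorem (Theorem \ref{thm2}) and Weyl equidistribution, together with the transfer $a\mapsto \{f(a)/\alpha\}$ and the characterization that $m$ is a Beatty value iff $\{m/\alpha\}\in(1-1/\alpha,1)$ --- but it organizes them in the opposite logical order. The paper argues by contradiction: it applies Theorem \ref{thm2} directly to the exact image set $A_\alpha=\{f(a)/\alpha \bmod 1\}$, which is why the theorem is stated for the \emph{outer} measure $\mu^*$, concludes $\mu^*(A_\alpha)\le \frac1k-\frac1{k\alpha}$, and then uses Weyl to show that covering $A_\alpha$ by finitely many short intervals forces $\overline d(A)$ to be too small. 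You instead proceed directly: Weyl is used first, to discretize and produce a finite union of grid intervals $\widetilde A$ of measure $>\frac1k-\frac1{k\alpha}+\epsilon/2$, each interval witnessed by some $a\in A\cap[N_0,N]$; Theorem \ref{thm2} is then applied to this nice set (with the shrunken window $(1-\frac1\alpha+\delta,1-\delta)$ so that the $k/M<\delta$ lift-back error is absorbed), yielding arbitrarily large explicit solutions. The trade-off: the paper's route is shorter and exploits the $\mu^*$ formulation to avoid any approximation step, while yours only needs Theorem \ref{thm2} for finite unions of intervals and is constructive (it produces solutions above any threshold $N_0$ rather than deriving a density contradiction), at the cost of the $\delta$-margin and uniform-Weyl bookkeeping. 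One small tightening: rather than invoking the measure inequality $\mu(k\widetilde A \bmod 1)\ge\min(1,k|\widetilde A|)$ (which in the paper is only a remark, stated for open sets), it is cleaner to apply Theorem \ref{thm2} itself with $\beta_0=1-\frac1\alpha+2\delta$ and $B=(1-\frac1\alpha+\delta,1-\delta)$, which directly hands you $y_1,\dots,y_k\in\widetilde A$ with $y_1+\cdots+y_k \bmod 1\in B$; your hypotheses $\delta<k\epsilon/4$ and $|\widetilde A|>\frac1k-\frac1{k\alpha}+\epsilon/2$ are exactly what is needed, so this is a rephrasing, not a repair. (Both your argument and the paper's implicitly assume $f$ is non-constant, as Weyl requires.)
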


\begin{corollary}\label{hegyvari}
  For any $k\ge 2$, the density $\frac{1}{k}-\frac{1}{k\alpha}$ of $A$  satisfying $(\ref{eq-1-1})$ is optimal. 
\end{corollary}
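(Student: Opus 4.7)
The plan is to deduce Corollary \ref{hegyvari} as an immediate consequence of Theorem \ref{thm1} combined with the existence construction that Hegyv\'ari has already furnished and which is recalled in the introduction. Optimality of the density $\frac{1}{k}-\frac{1}{k\alpha}$ naturally splits into two halves: (a) this density is attained by some $A$ satisfying (\ref{eq-1-1}), and (b) no admissible $A$ can have strictly greater upper asymptotic density.

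For (a) there is nothing new to prove. Hegyv\'ari \cite{Hegyvari} has already exhibited, for every $k\ge 2$ and every polynomial $f$ with positive integer coefficients, an infinite sequence $A\subseteq\mathbb{N}$ whose density $d(A)$ exists and equals $\frac{1}{k}-\frac{1}{k\alpha}$, and for which (\ref{eq-1-1}) holds. Hence the value $\frac{1}{k}-\frac{1}{k\alpha}$ is actually realised, so optimality cannot be witnessed by any smaller number.

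For (b) I would argue by contradiction in one line. Suppose some infinite $A\subseteq\mathbb{N}$ satisfied (\ref{eq-1-1}) while $\overline{d}(A)>\frac{1}{k}-\frac{1}{k\alpha}$. Then Theorem \ref{thm1} applies directly to this $A$ and yields
\begin{align*}
\Big|\big\{f(a_1)+\ldots+f(a_k): a_i\in A,\,1\le i\le k\big\}\cap\big\{\lfloor n\alpha\rfloor: n\in\mathbb{N}\big\}\Big|=\infty,
\end{align*}
so in particular the intersection is nonempty, flatly contradicting the emptiness demanded by (\ref{eq-1-1}). Consequently no such $A$ exists, which is exactly the sharpness statement (b).

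The combination of (a) and (b) is precisely the content of Corollary \ref{hegyvari}: the threshold $\frac{1}{k}-\frac{1}{k\alpha}$ is simultaneously achieved by Hegyv\'ari's construction and never strictly surpassed by virtue of Theorem \ref{thm1}. Since the corollary is merely a formal packaging of these two facts, there is no genuine obstacle at this stage; the only elementary observation needed is that $d(A)\le \overline{d}(A)$, so that any hypothetical counterexample to optimality automatically triggers the hypothesis of Theorem \ref{thm1}. The whole analytic difficulty has been absorbed into the proof of Theorem \ref{thm1} itself, which in turn rests on the modular Brunn-Minkowski inequality developed in this paper.
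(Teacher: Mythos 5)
Your proposal is correct and matches the paper's intent exactly: the corollary is stated there without a separate proof precisely because it follows immediately by combining Hegyv\'ari's construction attaining the density $\frac{1}{k}-\frac{1}{k\alpha}$ with Theorem \ref{thm1}, which rules out any $A$ satisfying (\ref{eq-1-1}) with $\overline{d}(A)>\frac{1}{k}-\frac{1}{k\alpha}$. Your two-part packaging (attainment plus non-surpassability) is the same argument, just written out explicitly.
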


The proof of Theorem \ref{thm1} makes use of the following two lemmas. Let $\mathbb{Z}$ be the set of integers. The first lemma is Weyl's equidistribution theorem \cite{Weyl1, Weyl2}.

\begin{lemma}[Weyl]\label{weyl}
Let $f(x)\in \mathbb{Z}[x]$ and $\rho$ be an irrational number. Let $J\subseteq(0,1)$ be a finite union of intervals. Then the asymptotic density of the set 
$$
H=\big\{n\in \mathbb{N}: \{\rho f(n)\}\in J\big\}
$$
exists and $d(H)=\mu(J)$.
\end{lemma}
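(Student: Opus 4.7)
The plan is to deduce the lemma from the equidistribution of the sequence $x_n := \{\rho f(n)\}$ modulo $1$, which in turn follows from Weyl's exponential sum criterion. First I would reduce the statement about $d(H)$ to equidistribution: if $(x_n)_{n\ge 1}$ is equidistributed in $[0,1)$, then for any interval $(a,b)\subseteq[0,1)$ the proportion of $n\le N$ with $x_n\in(a,b)$ tends to $b-a$, and by finite additivity this extends to any finite union of intervals $J$, yielding $d(H)=\mu(J)$. It therefore suffices to prove that $(\{\rho f(n)\})_{n\ge1}$ is equidistributed mod $1$; we may assume $\deg f \ge 1$, since for constant $f$ the conclusion is trivial up to a natural interpretation.

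By Weyl's criterion, equidistribution is equivalent to showing
$$
S_N(h) := \frac{1}{N}\sum_{n=1}^{N} e^{2\pi i h\rho f(n)} \longrightarrow 0 \qquad \text{as } N\to\infty
$$
for every integer $h\ne0$. I would prove this by induction on $d:=\deg f$. In the base case $d=1$, writing $f(n)=an+c$ with $a\in\mathbb{Z}\setminus\{0\}$, the sum is geometric with ratio $e^{2\pi i h\rho a}\ne1$ (since $h\rho a$ is irrational), so $|S_N(h)|\ll 1/N$. For the inductive step I would apply the van der Corput (Weyl differencing) inequality, which for any $1\le H\le N$ yields a bound of the form
$$
|S_N(h)|^2 \;\ll\; \frac{1}{H} + \frac{1}{HN}\sum_{k=1}^{H-1}\left|\sum_{n=1}^{N-k} e^{2\pi i h\rho\bigl(f(n+k)-f(n)\bigr)}\right|.
$$
For each fixed $k\ge1$ the polynomial $g_k(x):=f(x+k)-f(x)$ lies in $\mathbb{Z}[x]$ and has degree exactly $d-1$ with nonzero integer leading coefficient (namely $kd\,a_d$, where $a_d$ is the leading coefficient of $f$). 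By the induction hypothesis applied to $g_k$ with the same $h$ and $\rho$, the inner sum is $o(N)$. Letting $N\to\infty$ first and then $H\to\infty$ forces $S_N(h)\to0$, completing the induction.

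The main technical step is the van der Corput differencing estimate: one must verify the inequality above and confirm that the differenced polynomial $g_k$ retains integer coefficients together with an exact degree drop, so that the inductive hypothesis applies verbatim. Both points are entirely standard, and with these ingredients the reduction in the first paragraph immediately promotes Weyl's criterion to the density statement $d(H)=\mu(J)$ for any finite union of intervals $J\subseteq(0,1)$.
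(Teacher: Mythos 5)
Your proof is correct. Note, though, that the paper does not prove this lemma at all: it is quoted as a classical result of Weyl, with citations to Weyl's 1914 and 1916 papers, and is used as a black box in the proof of Theorem 3.1. Your argument is precisely the standard proof of that classical result: reduce the density statement to equidistribution of $\{\rho f(n)\}$ (finite additivity over a decomposition of $J$ into disjoint intervals, with endpoints contributing density zero), then verify Weyl's criterion by induction on $\deg f$, with a geometric-series bound in degree one and van der Corput differencing for the inductive step, using that $g_k(x)=f(x+k)-f(x)$ lies in $\mathbb{Z}[x]$ with exact degree $d-1$ and nonzero leading coefficient $kda_d$, so that $h\rho g_k$ again has irrational leading coefficient. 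The order of limits ($N\to\infty$ first, then $H\to\infty$) is handled correctly. Your remark that the case of constant $f$ must be excluded is well taken: as literally stated the lemma fails for $\deg f=0$ (then $\{\rho f(n)\}$ is a single point and $d(H)\in\{0,1\}$), but in the paper's application $f$ is a nonconstant polynomial, so nothing is lost. In short, your proposal supplies a complete self-contained proof of a statement the paper merely cites.
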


Let $\mu^*$ and $\mu_*$ be the Lebesgue outer and inner measures, respectively.  We need the following result of Raikov \cite{Raikov} (see also Ruzsa \cite[4.1 Proposition]{Ruzsa-new}). 

\begin{proposition}\label{proposition}
Let $\mathbb{R}$ be the set of real numbers and $\mathbb{T}=\mathbb{R}/\mathbb{Z}$. Then for any $\mathcal{A}, \mathcal{B}\subset\mathbb{T}$ we have
$$
\mu^*(\mathcal{A}+\mathcal{B})\ge \min\big\{\mu^*(\mathcal{A})+\mu_*(\mathcal{B}), 1\big\}.
$$
\end{proposition}

Our second lemma is a consequence of Proposition \ref{proposition}.

\begin{lemma}\label{Brunn-M}
Let $k\ge 2$ be a given integer and $0<\beta<1$ a given number. Let $A\subset (0,1)$ be a set with $\mu^*(A)>\beta/k$ and $B\subset (0,1)$ be an open set with $\mu(B)\ge 1-\beta$. Then there exist $k$ elements $\alpha_1, \alpha_2,\ldots, \alpha_k$ of $A$ $($not necessarily different$)$ such that
\begin{align*}
\alpha_1+\alpha_2+\cdots+\alpha_k \pmod{1}\in B.
\end{align*}
\end{lemma}

\begin{proof}
The following argument is given by Imre Z. Ruzsa. All of the sets discussed below can be naturally regarded as subsets of $\mathbb{T}$.
Since $B$ is open, we know that $a-B$ is open for any $a\in A$, where $a-B=\big\{a-b: b\in B\big\}$. Thus, the union of open sets
$$
A-B=\bigcup_{a\in A}(a-B)
$$
is also open. It then follows from the inductive process that all the sets
$$
jA-B=\{a_1+a_2+\cdots+a_j-b: a_i\in A,\ 1\le i\le j,\ b\in B\} \quad (1\le j\le k)
$$
are open. Hence, $jA-B~(1\le j\le k)$ are measurable. 
By Proposition \ref{proposition} we have
\begin{align}\label{eq-m-1}
\mu\Big((k-1)A-B\Big)&=\mu^*\Big((k-1)A-B\Big)\nonumber\\
&\ge \min\Big\{ (k-1)\mu^*(\mathcal{A})+\mu_*(-\mathcal{B}), 1\Big\}\nonumber\\
&=\min\Big\{ (k-1)\mu^*(\mathcal{A})+\mu(\mathcal{B}), 1\Big\}.
\end{align}
We are going to show $0\in kA-B$, from which our desired result clear follows. Assume the contrary, i.e., 
$0\not\in kA-B$. Then $-a\not\in (k-1)A-B$ for any $a\in A$ which means 
\begin{align*}
-A\subset \mathbb{T}\setminus \Big((k-1)A-B\Big).
\end{align*}
Therefore, we conclude that 
\begin{align}\label{eq-m-2}
\mu^*(A)=\mu^*(-A)\le \mu\Big(\mathbb{T}\setminus \big((k-1)A-B\big)\Big)= 1-\mu\Big((k-1)A-B\Big).
\end{align}
Since $\mu^*(A)\ge \beta/k>0$, we see from (\ref{eq-m-1}) and (\ref{eq-m-2}) that
$$
(k-1)\mu^*(\mathcal{A})+\mu(\mathcal{B})<1
$$
and hence
\begin{align}\label{eq-m-3}
\mu\Big((k-1)A-B\Big)\ge (k-1)\mu^*(\mathcal{A})+\mu(\mathcal{B}).
\end{align}
By (\ref{eq-m-2}) and (\ref{eq-m-3}) we get
$$
k\mu^*(A)+\mu(\mathcal{B})\le 1,
$$
which is certainly a contradiction with
$$
k\mu^*(A)+\mu(\mathcal{B})>k\cdot\frac{\beta}{k}+1-\beta=1,
$$
proving our lemma.
\end{proof}

\begin{proof}[Proof of Theorem \ref{thm1}]
We will prove this conclusion by contradiction. Suppose that there is an infinite $A$ with $\overline{d}(A)>\frac{1}{k}-\frac{1}{k\alpha}$ such that
\begin{align*}
\Big|\big\{f(a_1)+\ldots+f(a_k): a_i\in A, 1\le i\le k\big\}\cap \big\{\lfloor n\alpha\rfloor: n\in \mathbb{N}\big\}\Big|<\infty.
\end{align*}
Deleting finitely many elements of $A$, we can assume without loss of generality that
\begin{align}\label{condition}
\big\{f(a_1)+\ldots+f(a_k): a_i\in A, 1\le i\le k\big\}\cap \big\{\lfloor n\alpha\rfloor: n\in \mathbb{N}\big\}=\emptyset.
\end{align}
Let 
$$
A_\alpha:=\left\{\frac{f(a)}{\alpha} \pmod{1}: a\in A\right\}.
$$
We claim that 
\begin{align}\label{outer}
    \mu^*(A_\alpha)\le \frac{1}{k}-\frac{1}{k\alpha}.
\end{align}
Suppose the contrary, i.e., $\mu^*(A_\alpha)> \frac{1}{k}-\frac{1}{k\alpha}$. Then by Lemma \ref{Brunn-M} with $\beta=1-\frac{1}{\alpha}$ there are $k$ elements $\alpha_1, \ldots, \alpha_k$ of $A_\alpha$ such that
\begin{align}\label{eq:Thm1-1}
    \alpha_1+\cdots+\alpha_k \pmod{1}\in \left(1-\frac{1}{\alpha}, 1\right).
\end{align}
By the definition of $A_\alpha$, there are $k$ elements $a_1, \ldots, a_k$ of $A$ such that for any $1\le i\le k$ we have
\begin{align}\label{eq:Thm1-2}
\frac{f(a_i)}{\alpha} \pmod{1}=\alpha_i.
\end{align}
By (\ref{eq:Thm1-1}) and (\ref{eq:Thm1-2}), we have
\begin{align*}
\frac{f(a_1)}{\alpha} +\cdots+ \frac{f(a_k)}{\alpha}\pmod{1}\in \left(1-\frac{1}{\alpha}, 1\right).
\end{align*}
It follows that there is a positive integer $n$ such that
\begin{align*}
  n -\frac{1}{\alpha} <\frac{f(a_1)}{\alpha} +\cdots+ \frac{f(a_k)}{\alpha}<n.
\end{align*}
Hence, we get
$$
f(a_1) +\cdots+ f(a_k)=\lfloor \alpha n\rfloor,
$$
which is a contradiction to (\ref{condition}). So, the claim (\ref{outer}) is proved.

Now, by (\ref{outer}) and the definition of Lebesgue outer measure, for any $\epsilon>0$ there exist finitely many open intervals $I_j$, where $1\le j\in J $ such  that
\begin{align}\label{eq:new-1}
    A_\alpha\subset \bigcup_{ 1\le j\le J} I_j \quad \text{and} \quad \mu\Big(\bigcup_{1\le j\le J} I_j\Big)< \frac{1}{k}-\frac{1}{k\alpha}+\varepsilon.
\end{align}
We choose 
$$
\varepsilon=\frac{\overline{d}(A)-(\frac{1}{k}-\frac{1}{k \alpha})}{2},
$$
and now consider  
$$
A^*=\left\{n\in \mathbb{N}: \frac{f(n)}{\alpha} \pmod{1}\in\bigcup_{1\le j\le J} I_j \right\}.
$$
Then $A\subset A^*$ since $A_\alpha\subset \cup_{1\le j\le J} I_j$. By Lemma \ref{weyl} we know that the density of $A^*$ exists and 
$$
d(A^*)=\mu\Big(\bigcup_{1\le j\le J} I_j\Big)< \frac{1}{k}-\frac{1}{k\alpha}+\varepsilon<\overline{d}(A),
$$
which contradicts $A\subset A^*$.
\end{proof}

\section*{Acknowledgments}
We thank Changhao Chen for his interest in this article. We are grateful to the one who brought our attention to the results in \cite{Raikov} and \cite{Ruzsa-new}, and to Imre Z. Ruzsa for his very detailed explanations on the results in \cite{Ruzsa-new}.

Y.D. is supported by National Natural Science Foundation of China  (Grant No. 12201544) and China Postdoctoral Science Foundation (Grant No. 2022M710121).
H.L. is supported by  National Natural Science Foundation of China (Grant No. 12201313).


\begin{thebibliography}{1}
\bibitem{Hegyvari} 
N. Hegyvari, {\it On a problem of S\'ark\"ozy,} Period. Math. Hungar. {\bf 47} (2003),  89--93.


\bibitem{Raikov}
D. Raikov, {\it On the addition of point-sets in the sense of Schnirelmann,}  Rec. Math. [Mat. Sbornik] N.S. {\bf 5} (47) (1939), 425--440. 

\bibitem{Ruzsa-new} 
I. Z. Ruzsa, {\it A concavity property for the measure of product sets in groups,} Fund. Math. {\bf140} (1992), no. 3, 247--254. 

\bibitem{Sarkozy} 
A. S\'ark\"ozy, {\it Unsolved problems in number theory,} Period. Math. Hungar. {\bf 42} (2001),  17--35.

\bibitem{Weyl1}
H. Weyl, {\it \"Uber ein Problems aus dem Gebiete der diophantischen Approximationen,} G\"ott. Nachr. (1914), 234--244.

\bibitem{Weyl2} 
H. Weyl, {\it \"Uber die  Gleichverteilung von Zahlen mod. Eins,} Math. Ann. {\bf 77} (1916), 313--352.

\end{thebibliography}
\end{document}